\documentclass{article}%
\usepackage{graphicx}
\usepackage{amsmath,amssymb,amsfonts}%
\usepackage{theorem}%
\usepackage{color}%
\usepackage{hyperref}
\usepackage[font={small, it}]{caption}
\usepackage{caption}
\usepackage{subcaption}
\setlength{\textwidth}{17cm}
\setlength{\textheight}{9in}
\setlength{\oddsidemargin}{0in}
\setlength{\topmargin}{-1.5cm}
\usepackage{tikz}

\setlength{\parindent}{0ex}%
\setlength{\parskip}{1ex}%

\theoremstyle{change}%
\sloppy%

\newtheorem{definition}{Definition:}[section]%
\newtheorem{proposition}[definition]{Proposition:}%
\newtheorem{theorem}[definition]{Theorem:}%
\newtheorem{lemma}[definition]{Lemma:}%
\newtheorem{corollary}[definition]{Corollary:}%
{\theorembodyfont{\rmfamily}\newtheorem{remark}[definition]{Remark:}}%
{\theorembodyfont{\rmfamily}\newtheorem{example}[definition]{Example:}}%

\newenvironment{proof}{{\bf Proof:}}
{\qquad \hspace*{\fill} $\Box$}%

\newcommand{\fg}{\mathfrak{g}}%
\newcommand{\fn}{\mathfrak{n}}%
\newcommand{\fh}{\mathfrak{h}}%

\newcommand{\inner}{\operatorname{int}}%
\newcommand{\cl}{\operatorname{cl}}%
\newcommand{\fix}{\operatorname{fix}}%
\newcommand{\rme}{\mathrm{e}}%
\newcommand{\OC}{\mathcal{O}}%
\newcommand{\UC}{\mathcal{U}}%
\newcommand{\ZC}{\mathcal{Z}}%
\newcommand{\XC}{\mathcal{X}}%
\newcommand{\DC}{\mathcal{D}}%

\newcommand{\HB}{\mathbb{H}}%
\newcommand{\N}{\mathbb{N}}%
\newcommand{\R}{\mathbb{R}}%
\newcommand{\Z}{\mathbb{Z}}%

\usepackage[pagewise]{lineno}

\begin{document}

\title{Weak condition for the existence of control sets with a nonempty interior for linear control systems on nilpotent groups}
\author{Adriano Da Silva\thanks{Supported by Proyecto UTA Mayor Nº 4768-23} \\
	Departamento de Matem\'atica,\\Universidad de Tarapac\'a - Sede Iquique, Chile
 \and
 		Anderson F. P. Rojas \\
 		Instituto de Matem\'{a}tica\\
		Universidade Estadual de Campinas, Brazil}
\date{\today}
\maketitle

\begin{abstract}
	In this paper, we show that for a linear control system on a nilpotent Lie group, the Lie algebra rank condition is enough to assure the existence of a control set with a nonempty interior, as soon as the set of singularities of the drift is compact. Moreover, this control set is unique and contains the singularities of the drift in its closure. 
\end{abstract}

 {\small {\bf Keywords:} Linear control systems, control sets, nilpotent Lie groups}
	
	{\small {\bf Mathematics Subject Classification (2020): 22E25, 93C05} }%

\section{Introduction}

In the study of control systems, the control sets play an important role since they contain several dynamical properties of the systems, such as equilibrium points, recurrent points, periodic and bounded orbits, etc. In particular, exact controllability holds in the interior of such control sets.   

On the other hand, due to the work of P. Jouan \cite{JPh1}, any control-affine system with complete vector fields acting transitively on a connected manifold is equivalent to a linear control system on a Lie group or on a homogeneous space, as soon as the Lie algebra they generate is finite dimensional. Therefore, linear control systems appear as a classifying family for more general control-affine systems on connected manifolds, showing that the understanding of such systems is worth pursuing.

In this paper, we show that if the drift of a linear control system is regular (see Definition \ref{regular}), the Lie algebra rank condition is enough to assure the existence and uniqueness of a control set with a nonempty interior. This is a remarkable fact, since the Lie algebra rank condition is the weakest necessary condition to assure the existence of such subsets. 

The paper is divided as follows: In Section 2, we introduce the basic concepts of linear vector fields and linear control systems. In this section, we also study a family of diffeomorphisms induced by a linear vector field. Their understanding will be central to our main result. In Section 3, we prove our main result, namely, that a linear control system whose drift has a compact set of singularities admits a unique control set with a nonempty interior. We start by analyzing the regular case. By using the diffeomorphisms induced by the drift of the system, we first show the existence of control sets with a nonempty interior. In sequence, we show that all of the previous control sets have nonempty intersections and, hence, coincide. For the general case, we consider a fibration between nilpotent groups, whose fiber is compact and given by the set of singularities of the drift. Using general results, we are then able to show that the preimage of the control set obtained for the regular case is in fact a control set for the initial system, implying its existence and uniqueness. In Section 4, we provide two beautiful examples of non-regular systems on the Heisenberg group whose dynamics are drastically different. Both systems satisfy the Lie algebra rank condition; however, one of them is controllable and the other has no control sets with a nonempty interior but only one-point control sets.

\subsection*{Notations}

Let $N$ be a connected Lie group with Lie algebra $\fn$. By $\exp:\fn\rightarrow N$, we denote the exponential map of $N$. For any element $x\in N$, we denote by $L_x$ and $R_x$ the left and right translations of $N$ by $x$, respectively. The group of automorphisms of $N$ is denoted by $\mathrm{Aut}(N)$ and by $\mathrm{Aut}(\fn)$, the set of automorphisms of $\fn$. If $f:N\rightarrow N$ is a differentiable map, its differential at a point $x\in N$ is denoted by $(df)_x$. The set of fixed points of $f$ is the set 
$$\fix(f):=\{x\in X, f(x)=x\}.$$ 
If $X$ is a complete vector field on $N$,  with associated flow $\{\psi_S\}_{S\in\R}$, then 
$$\fix(\psi):=\bigcap_{S\in\R}\fix(\psi_S),$$
is the set of singularities of $X$.

\section{Background}

In this section, we give some background needed to understand the problem at hand. For more on the subject, the reader can consult, for instance, \cite{VATi, DS, JPh1, Jouan11}.

\subsection{Linear vector fields}

Let $N$ be a connected nilpotent Lie group, and  $\mathfrak{n}$ its associated nilpotent Lie algebra, which we identify with the set of right-invariant vector fields on $N$. A {\bf linear }vector field $\mathcal{X}$ on $N$ is a complete vector field whose associated flow $\{\varphi_S\}_{S\in\R}$ is a $1$-parameter group of automorphisms, that is, $\varphi_S\in\mathrm{Aut}(N)$ for all $S\in\R$. 
Associated with $\XC$, we have a derivation $A$ of $\fn$ defined by $A(Y):=[\XC, Y]$. The relation between $A$ and $\XC$ is given by the equations
\begin{equation}
    \label{relation}(d\varphi_{S})_{e}=\mathrm{e}^{SA}\hspace{.5cm}\mbox{ and }\hspace{.5cm} \varphi_S(\exp X)=\exp (\rme^{SA}X),
\end{equation}
where $\rme^{tA}$ is the matrix exponential of $A$.

The next result associates a diffeomorphism with any automorphism with only the origin as a fixed point. Since it is a slightly generalization of \cite[Proposition 2.7]{DSAy}, we omit its proof.

\begin{proposition}
	\label{difeo}  
 
 If $\psi\in\mathrm{Aut}(N)$ is such that $\fix((d\psi)_e)=\{0\}$, the map
	$$
	f_{\psi}: N\rightarrow N, \hspace{1cm} f_{\psi}(x):=x\psi(x^{-1}),$$
is a surjective local diffeomorphism. Moreover, $f_{\psi}$ is injective if and only if $\fix(\psi)=\{e\}$.
\end{proposition}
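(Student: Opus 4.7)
The plan is to establish the surjective local-diffeomorphism statement by induction on the nilpotency class of $N$, after first reducing to the simply connected case via the universal cover. The injectivity characterization will then follow from a short direct computation.

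For the base case (abelian $N$), after identifying $N$ with $\fn$ via $\exp$, one has $f_\psi(X) = X - d\psi_e X$, a linear map which is invertible precisely because $\fix(d\psi_e) = \{0\}$. For the inductive step, with $N$ simply connected nilpotent of class $k \geq 2$, I would work with the center $Z$ of $N$: it is a $\psi$-invariant closed connected subgroup (a vector group in this setting), and the quotient $\bar N := N/Z$ is simply connected nilpotent of class $< k$ (since $N^{k-1} \subseteq Z$), carrying an induced automorphism $\bar\psi$ whose differential at $e$ still does not have $1$ as an eigenvalue. The key identity uses centrality of $z \in Z$ to move $z\psi(z^{-1})$ past $\psi(x^{-1})$:
\[
f_\psi(xz) \;=\; xz\,\psi(z^{-1})\,\psi(x^{-1}) \;=\; x\,\psi(x^{-1})\cdot z\,\psi(z^{-1}) \;=\; f_\psi(x)\,f_{\psi|_Z}(z).
\]
This says simultaneously that $f_\psi$ covers $f_{\bar\psi}$ through the projection $\pi:N\to\bar N$, and that along each fiber $xZ$ its restriction is a left translate of $f_{\psi|_Z}$, which falls under the abelian base case and is therefore a diffeomorphism of $Z$ onto itself. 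Combined with the inductive hypothesis on $f_{\bar\psi}$, this forces $f_\psi$ to be a surjective local diffeomorphism: surjectivity is obtained by first hitting the quotient and then correcting within the fiber using surjectivity of $f_{\psi|_Z}$, while the local-diffeomorphism property follows by locally trivializing $\pi$, in which coordinates $f_\psi$ becomes a product-type map with block-triangular invertible Jacobian.

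To drop the simply-connected assumption, I would lift $\psi$ to an automorphism $\tilde\psi$ of the universal cover $\tilde N$; since $\pi_1(N)$ embeds as a discrete central subgroup of $\tilde N$ preserved by $\tilde\psi$, a short computation exploiting centrality of that subgroup shows that $f_{\tilde\psi}$ descends to $f_\psi$, which then inherits both properties. The injectivity statement is a one-line manipulation: the equality $f_\psi(x_1)=f_\psi(x_2)$ rearranges (using that $\psi$ is a homomorphism) into $x_2^{-1}x_1 = \psi(x_2^{-1}x_1)$, so the fibers of $f_\psi$ are precisely the left cosets of $\fix(\psi)$, and $f_\psi$ is injective exactly when $\fix(\psi) = \{e\}$. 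The main obstacle I anticipate is the bookkeeping in the inductive gluing---checking that the local-diffeomorphism property transfers cleanly from base and fiber, and that the lift/descent through the universal cover is well defined---but the equivariance identity above makes both steps essentially automatic, so the proof should be routine once the central reduction is set up.
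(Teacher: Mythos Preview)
Your argument is correct. The induction on nilpotency class through the center is clean: the identity $f_\psi(xz)=f_\psi(x)\,f_{\psi|_Z}(z)$ for central $z$ (which uses both that $z$ is central and that $\psi(Z)\subset Z$) does exactly what you claim, and the reduction to the simply connected cover goes through because $\ker\pi$ is discrete normal, hence central, so $f_{\tilde\psi}$ indeed descends. The injectivity computation is fine; the fibers are the left cosets $x\cdot\fix(\psi)$.

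There is no proof in the paper to compare against: the authors omit it, stating only that the result is a slight generalization of \cite[Proposition~2.7]{DSAy}. So your write-up actually supplies more than the paper does. One remark you might find useful for streamlining: the local-diffeomorphism part does not require the induction. Writing $f_\psi(x\exp(tX))=L_x\circ R_{\psi(x^{-1})}\bigl(\exp(tX)\exp(-t\,d\psi_e X)\bigr)$ and differentiating at $t=0$ gives
\[
(df_\psi)_x\circ (dL_x)_e \;=\; d\bigl(L_x\circ R_{\psi(x^{-1})}\bigr)_e\circ\bigl(I-d\psi_e\bigr),
\]
so $(df_\psi)_x$ is invertible at every $x$ directly from the hypothesis $\fix(d\psi_e)=\{0\}$. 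The induction is then only needed for surjectivity, where your fiberwise-correction argument is the natural one.
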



	

\begin{definition}
\label{regular}
    A linear vector field $\XC$ on $N$ is said to be {\bf regular} if its associated derivation $A$ satisfies $\det A\neq 0$.
\end{definition}

In what follows, we specialize on the previous result for the automorphisms $\varphi_S$ associated with a linear vector field $\XC$. 

\begin{proposition}
\label{difeolinear}
    Let $\XC$ be a linear vector field on $N$, and consider the maps $f_S:=f_{\varphi_S}$, where $\{\varphi_S\}_{S\in\R}$ is the flow of $\XC$. It holds
    \begin{enumerate}
        \item If $A$ is the derivation associated with $\XC$ then $$\fix(\varphi)=\exp\ker A.$$

        \item If $\XC$ is regular, then $N$ is simply connected;
        
        \item $f_{S_0+S_1}(x)=f_{S_0}(x)\varphi_{S_0}(f_{S_1}(x))=f_{S_1}(x)\varphi_{S_1}(f_{S_0}(x))$, for any $S_1, S_2\in\R$, $x\in N$;
        
        \item There exists a discrete subset $\ZC$ of $\R$ such that $f_S$ is a diffeomorphism for any $S\in\R\setminus\ZC$
        
    \end{enumerate}
\end{proposition}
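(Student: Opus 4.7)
The main lever for items (1) and (2) will be the relation $\varphi_S(\exp X)=\exp(\rme^{SA}X)$ recorded in \eqref{relation}. For (1), my plan is to first handle the simply connected case — where $\exp:\fn\to N$ is a diffeomorphism and $\exp X\in\fix(\varphi)$ reduces to $\rme^{SA}X=X$ for every $S\in\R$, i.e.\ to $X\in\ker A$ — and then to pass to the universal cover $p:\tilde N\to N$ for a general connected nilpotent $N$. I would lift $\XC$ to a linear field $\tilde\XC$ on $\tilde N$ with the same associated derivation $A$, and exploit discreteness of $\Gamma=\ker p$: for any $\tilde x$ with $p(\tilde x)\in\fix(\varphi)$, the continuous curve $S\mapsto\tilde\varphi_S(\tilde x)\,\tilde x^{-1}$ lands in $\Gamma$ and equals $e$ at $S=0$, hence is identically $e$. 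Therefore $\tilde x\in\fix(\tilde\varphi)=\exp_{\tilde N}(\ker A)$, and projecting yields $\fix(\varphi)=\exp_N(\ker A)$.

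For (2) I would reuse the covering framework. Each $\varphi_S\in\mathrm{Aut}(N)$ lifts uniquely to $\tilde\varphi_S\in\mathrm{Aut}(\tilde N)$ and must preserve $\Gamma$; fixing $\gamma\in\Gamma$, the continuous curve $S\mapsto\tilde\varphi_S(\gamma)$ lies in the discrete set $\Gamma$ and equals $\gamma$ at $S=0$, so it is constant. This forces $\Gamma\subseteq\fix(\tilde\varphi)=\exp_{\tilde N}(\ker A)$, which collapses to $\{e\}$ under the regularity hypothesis $\ker A=0$; consequently $\Gamma=\{e\}$ and $N=\tilde N$ is simply connected.

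Item (3) I would settle by a direct algebraic manipulation exploiting that $\varphi_{S_0}$ is a group homomorphism and $\{\varphi_S\}$ a one-parameter subgroup:
\begin{equation*}
f_{S_0}(x)\,\varphi_{S_0}\bigl(f_{S_1}(x)\bigr)=x\,\varphi_{S_0}(x^{-1})\,\varphi_{S_0}(x)\,\varphi_{S_0+S_1}(x^{-1})=x\,\varphi_{S_0+S_1}(x^{-1})=f_{S_0+S_1}(x),
\end{equation*}
with the symmetric identity obtained by swapping $S_0$ and $S_1$.

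For (4) the plan is to combine Proposition \ref{difeo} with items (1) and (2). The statement is meaningful only in the regular case, since $\ker A\ne 0$ would force $\exp(\ker A)\subseteq\fix(\varphi_S)$ for every $S$ and prevent $f_S$ from ever being a local diffeomorphism. Under regularity, (2) makes $\exp$ a global diffeomorphism, so $\fix(\varphi_S)=\exp\bigl(\ker(\rme^{SA}-I)\bigr)$ and $\fix\bigl((d\varphi_S)_e\bigr)=\ker(\rme^{SA}-I)$ vanish simultaneously. Defining $\ZC:=\{S\in\R:1\in\mathrm{spec}(\rme^{SA})\}$, for $S\notin\ZC$ both hypotheses of Proposition \ref{difeo} hold for $\psi=\varphi_S$, so $f_S$ is an injective surjective local diffeomorphism, hence a diffeomorphism. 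Finally, $\ZC$ is the union of $\{0\}$ with the finitely many arithmetic progressions $(2\pi/|\mu|)\Z$ arising from the purely imaginary eigenvalues $i\mu$ of $A$, and is therefore discrete. I expect the main obstacle to be item (1) in the non-simply-connected setting; once the covering/lifting argument is in place, items (2)–(4) follow by routine manipulations.
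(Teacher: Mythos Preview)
Your proposal is correct and follows essentially the same route as the paper: a discreteness-plus-continuity argument for items (1) and (2), a direct algebraic check for (3), and the eigenvalue criterion $\det(I-\rme^{SA})\neq 0$ combined with Proposition~\ref{difeo} for (4). The only cosmetic difference is in item (1), where the paper exploits directly that $\exp:\fn\to N$ is a covering map (so $\exp^{-1}(x)$ is discrete and the continuous curve $S\mapsto\rme^{SA}X$ must be constant), whereas you pass through the universal cover $\tilde N$ and use discreteness of $\ker p$ instead---the two arguments are equivalent since $\exp_N=p\circ\exp_{\tilde N}$ with $\exp_{\tilde N}$ a diffeomorphism.
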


\begin{proof} 1. The inclusion $\exp \ker A\subset \fix(\varphi)$ is straightforward and hence we will omit its proof. 

Let $x\in\fix(\varphi)$. Since, the exponential is a covering map, there exists $X\in\fn$ such that $x=\exp X$. In particular, 
$$\forall S\in\R, \hspace{.5cm}x=\exp(X)=\varphi_S(\exp X)=\exp(\rme^{SA}X)\hspace{.5cm}\implies\hspace{.5cm}\rme^{SA}X\in\exp^{-1}(\exp X)=\exp^{-1}(x).$$
However, the fact that $\exp^{-1}(x)$ is discrete for covering maps, allows us to conclude that
$$\forall S\in\R, \hspace{.5cm} \rme^{SA}X=X\hspace{.5cm}\iff\hspace{.5cm}X\in\ker A,$$
showing that $\fix(\varphi)\subset\exp \ker A$, and proving the item.

    2. Let $\widetilde{N}$ be the simply connected covering of $N$ and $\pi:\widetilde{N}\rightarrow N$ be the canonical projection between them. Consider the lifting $\widetilde{\XC}$ of $\XC$ to $\widetilde{N}$, whose associated derivation is also $A$ (see \cite[Theorem 2.2]{VATi}). In particular, the respective flows satisfy 
$$\forall S\in\R, \hspace{.5cm}\pi\circ\widetilde{\varphi}_S=\varphi_S\circ\pi,$$
implying that  $\widetilde{\varphi}_S(\ker\pi)=\ker\pi$ for all  $S\in\R$. Since $\ker\pi$ is a discrete subgroup, we conclude that 
$$\ker\pi\subset \fix(\widetilde{\varphi}).$$
On the other hand, the fact that $\widetilde{N}$ is a simply connected nilpotent Lie group implies that the exponential map is a diffeomorphism, and hence, any $x\in \widetilde{N}$ can be uniquely written as $x=\rme^X$ for some $X\in\fn$. As a consequence, we get using equation (\ref{relation}) that
$$\forall S\in\R, \hspace{.5cm}\widetilde{\varphi}_S(x)=x\hspace{.5cm}\iff\hspace{.5cm}\forall S\in\R, \hspace{.5cm}\rme^{S A}X=X, \hspace{.5cm}\iff\hspace{.5cm} X\in\ker A,$$
and since $\det A\neq 0$, we conclude that $\ker\pi=\{e\}$, showing that $\pi$ is a diffeomorphism and hence $N$ is simply connected.

3. It follows directly from the definition of $f_S$. In fact, 
$$
f_{S_0+S_1}(x)=x\varphi_{S_0+S_1}(x^{-1})=x\varphi_{S_0}(x^{-1})\varphi_{S_0}(x)\varphi_{S_0}(\varphi_{S_1}(x^{-1}))=f_{S_0}(x)\varphi_{S_0}(f_{S_1}(x)).  $$

4. Let us consider $\pm i\mu_j, j=1, \ldots, m$ be the pure imaginary eigenvalues of $A$ and define the discrete subset of $\R$ given by
$$\ZC:=\left\{\frac{2k\pi}{\mu_j}, j=1, \ldots, m, k\in\Z\right\},$$
where $\ZC:=\{0\}$ if $A$ has no pure imaginary eigenvalue.  By Proposition \ref{difeo}, $f_S$ is a diffeomorphism as soon as $\fix(\rme^{S A})=\{0\}$ and $\fix(\varphi_S)=\{e\}$. Moreover, since $N$ is simply connected, relation (\ref{relation})
implies that 
$$\varphi_S(x)=x\hspace{.5cm}\iff \hspace{.5cm}\rme^{SA}X=X,\hspace{.5cm}\mbox{ with }\hspace{.5cm} x=\rme^X,$$
and hence, $f_S$ is a diffeomorphism as soon as $\det(I_{\fn}-\rme^{SA})\neq 0$ or equivalently, if $ S\in\R\setminus\ZC,$
concluding the proof.
\end{proof}

We finish this section with a lemma which assures that the set of the singularities of a linear vector field on a nilpotent Lie group $N$ is connected.

\subsection{Linear control systems}

Let $N$ be a connected Lie group with Lie algebra $\fn$ identified with the set of right-invariant vector fields on $N$.

A {\bf linear control system (abrev. LCS)} on $N$ is defined by the family of ODEs, 
\begin{flalign*}
&&\dot{x}(t)=\XC(x(t))+\sum_{j=1}^mu_j(t)Y^j(x(t)),  &&\hspace{-1cm}\left(\Sigma_N\right)
\end{flalign*}
where the {\bf drift} $\XC$ is a linear vector field, $Y^1, \ldots, Y^m\in\fn$ and ${\bf u}=(u_1, \ldots, u_m)\in\UC$. Here, $\UC$ is the set of admissible {\bf control functions} given by
$$\UC:=\{{\bf u}:\R\rightarrow\R^m; \;{\bf u}\;\mbox{ is a piecewise constant function with }  u(\R)\subset\Omega\},$$
where $\Omega\subset\R^m$ is a compact and convex subset containing the origin in its interior. We say that the LCS is {\bf regular} if its drift is regular.

For any $x\in N$ and ${\bf u}\in\UC$, the solution $S\mapsto\phi(S, x, {\bf u})$ of $\Sigma_N$ is complete and satisfies  
\begin{equation}
\label{prop}
\phi_{S, {\bf u}}\circ R_x=R_{\varphi_{S}(x)}\circ\phi_{S, {\bf u}}, \hspace{.5cm}\mbox{ for any } \hspace{.5cm}S\in\R, x\in N,
\end{equation}
where $\phi_{S, {\bf u}}(x):=\phi(S, x,  {\bf u})$, for any $x\in N$, and $\{\varphi_S\}_{S\in\R}$ is the flow of $\XC$.

For any $x\in N$, we define respectively
$$\mathcal{O}^{+}_{S}(x):=\{\phi(S,x, {\bf u}), {\bf u}\in \mathcal{U}\}\hspace{.5cm}\mbox{ and }\hspace{.5cm}\mathcal{O}^{+}(x):=\bigcup_{S>0}\mathcal{O}^{+}_{S}(x),$$
as the {\bf reachable from $x$} in time $S>0$ and the {\bf reachable set} of $x$. We say that $\Sigma_{N}$ satisfy the {\bf Lie algebra rank condition (abrev. LARC)} if $\fg$ is the smallest $\DC$-invariant Lie subalgebra containing $Y^1, \ldots, Y^m$. In particular, if the LARC is satisfied, the sets 
         $$\mathcal{O}^{+}_{\leq S}(x):=\bigcup_{0<s\leq S}\mathcal{O}^{+}_{s}(x), \hspace{.5cm}S>0,$$ have a nonempty interior. Moreover, $\inner\mathcal{O}^{+}_{\leq S}(x)$ (resp. $\inner\mathcal{O}^{+}(x)$) is dense in $\mathcal{O}^{+}_{\leq S}(x)$ (resp. $\mathcal{O}^{+}(x)$) for all $S>0$, $x\in N$.

The next proposition, \cite[Proposition 2]{Jouan11}, states the main properties of the reachable sets of a LCS.  

\begin{proposition}
    \label{propertiesLCSs}
    It holds:
    \begin{enumerate}
        \item $\mathcal{O}^{+}_{\leq S}(e)=\mathcal{O}^{+}_{S}(e)$ for all $S>0$;

        \item  $\mathcal{O}^{+}_{S_1}(e)\subset \mathcal{O}^{+}_{S_2}(e)$ for all $0<S_1\leq S_2$;

        \item $\mathcal{O}^{+}_{S}(x)=\mathcal{O}^{+}_{S}(e)\varphi_S(x)$ for all $S>0$ and $x\in N$;

        \item $\mathcal{O}^{+}_{S_1+S_2}(e)=\mathcal{O}^{+}_{S_1}(e)\varphi_{S_1}\left(\mathcal{O}^{+}_{S_2}(e)\right)=\mathcal{O}^{+}_{S_2}(e)\varphi_{S_2}\left(\mathcal{O}^{+}_{S_1}(e)\right)$, for all $S_1, S_2>0$.
    \end{enumerate}
\end{proposition}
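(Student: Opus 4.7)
The plan is to derive the four items from the commutation relation (\ref{prop}) together with the stability of $\UC$ under time translation and concatenation (both of which preserve the property of being piecewise constant with values in $\Omega$). The natural order is (3) $\Rightarrow$ (4) $\Rightarrow$ (2) $\Rightarrow$ (1), since (3) gives the algebraic interplay between reachable sets and the drift flow, (4) is essentially the semigroup law, (2) uses (4) applied to the identity, and (1) is a formality once (2) is available.

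For (3), I would simply evaluate the identity in (\ref{prop}) at the neutral element $e$, obtaining $\phi_{S,{\bf u}}(x)=\phi_{S,{\bf u}}(e)\,\varphi_S(x)$; taking the union over ${\bf u}\in\UC$ immediately gives $\mathcal{O}^{+}_{S}(x)=\mathcal{O}^{+}_{S}(e)\varphi_S(x)$.

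For (4), the basic observation is that by uniqueness of solutions the trajectory $\phi_{S_1+S_2,{\bf u}}(e)$ equals $\phi_{S_2,{\bf v}}(\phi_{S_1,{\bf u}}(e))$, where ${\bf v}(t):={\bf u}(t+S_1)$ is the time-shifted control. Setting $y:=\phi_{S_1,{\bf u}}(e)\in\mathcal{O}^{+}_{S_1}(e)$ and applying (3) at $y$ rewrites this as $\phi_{S_2,{\bf v}}(e)\,\varphi_{S_2}(y)\in\mathcal{O}^{+}_{S_2}(e)\varphi_{S_2}(\mathcal{O}^{+}_{S_1}(e))$. The reverse inclusion is obtained by concatenating two admissible controls: given ${\bf u}\in\UC$ steering $e$ to $y$ in time $S_1$ and ${\bf v}\in\UC$ steering $e$ to $z$ in time $S_2$, the control that equals ${\bf u}$ on $[0,S_1]$ and ${\bf v}(\cdot - S_1)$ on $[S_1,S_1+S_2]$ is still in $\UC$ and realizes the point $z\,\varphi_{S_2}(y)$ in time $S_1+S_2$. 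The second equality of (4) follows by interchanging the roles of $S_1$ and $S_2$.

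The last two items exploit the fact that $\varphi_S\in\mathrm{Aut}(N)$ fixes the identity, so the zero control (which lies in $\UC$ since $0\in\inner\Omega$) produces a trajectory starting at $e$ that stays at $e$; hence $e\in\mathcal{O}^{+}_{S}(e)$ for every $S>0$. Plugging this into (4), for any $0<S_1\leq S_2$ one gets $\mathcal{O}^{+}_{S_2}(e)=\mathcal{O}^{+}_{S_1}(e)\varphi_{S_1}(\mathcal{O}^{+}_{S_2-S_1}(e))\supset \mathcal{O}^{+}_{S_1}(e)\varphi_{S_1}(e)=\mathcal{O}^{+}_{S_1}(e)$, which is (2); and (1) then reduces to the chain $\mathcal{O}^{+}_{\leq S}(e)=\bigcup_{0<s\leq S}\mathcal{O}^{+}_{s}(e)\subset\mathcal{O}^{+}_{S}(e)\subset\mathcal{O}^{+}_{\leq S}(e)$. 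I do not anticipate any real obstacle in this proof; the only delicate bookkeeping is checking that time-shifted concatenations of piecewise constant, $\Omega$-valued controls remain admissible, which is immediate from the definition of $\UC$.
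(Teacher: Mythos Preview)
Your argument is correct and is exactly the standard derivation of these identities; note, however, that the paper does not supply its own proof here but simply quotes the statement as \cite[Proposition~2]{Jouan11}. Your proof is essentially the one given in that reference, so there is nothing to compare.
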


\begin{remark}
\label{density}
    The previous proposition implies, in particular, that under the LARC, it holds
    $$\cl\left(\inner\mathcal{O}^{+}_{S}(x)\right)=\cl\left(\mathcal{O}^{+}_{S}(x)\right), \hspace{.5cm}\forall S>0, x\in N.$$
    
\end{remark}

Next, we define the concept of control sets.

\begin{definition}
\label{controlset}

A subset $D$ of $M$ is called a control set of the system $\Sigma_M$ if it satisfies the following properties:

\begin{itemize}
    \item [(i)] {\it (Weak invariance)} For every $x\in M$, there exists a ${\bf u}\in\mathcal{U}$ such that $\phi(\mathbb{R}^+,x,{\bf u})\subset D$;
    \item [(ii)] {\it (Approximate controllability)}  $ D\subset\mathrm{cl}\left(\mathcal{O}^{+}(x)\right)$ for every $x\in D$;
    \item[(iii)] {\it (Maximality)} $D$ is maximal with respect to properties (i) and (ii).
\end{itemize}
\end{definition}

In particular, when the whole state space $M$ is a control set, $\Sigma_M$ is said to be {\it controllable}.

\begin{remark}

It is worth mentioning that several results for control sets of LCSs were obtained in \cite{DSAyGZ} under the (much stronger) condition of local controllability around the identity element of the group. Such a condition, although sufficient, is not necessary for the existence of a control set with a nonempty interior. In fact, examples of LCSs with the identity element on the boundary of a control set with a nonempty interior can be found in \cite{DS1}.
\end{remark}

Let us finish this section with some comments on conjugations of linear systems.

Suppose $\widehat{N}$ is another nilpotent Lie group, and 
\begin{flalign*}
	  &&\dot{z}(s) = \hat{\XC}(z(s)) + \sum_{j=1}^{m}u_j(s)\hat{Y}^j(z(s)),  \hspace{.5cm} {\bf u} = (u_1,\ldots, u_m)\in \mathcal{U}. &&\hspace{-1cm}\left(\Sigma_{\widehat{N}}\right)
	  \end{flalign*}
a LCS on $\widehat{N}$. If $\psi:N\rightarrow \widehat{N}$ is a surjective homomorphism, we say that $\Sigma_N$ and $\Sigma_{\widehat{N}}$ are $\psi$-conjugated if their respective vector fields are $\psi$-conjugated, that is, 

$$\psi_*\circ\XC=\hat{\XC}\circ\psi\hspace{.5cm}\mbox{ and }\hspace{.5cm}\;\;\;\psi_*\circ Y^j=\hat{Y}^j\circ\psi, \hspace{.5cm} j=1,\ldots m.$$

If such $\psi$ exists, we say that $\Sigma_N$ and $\Sigma_{\widehat{N}}$ are $\psi$-conjugated. If $\psi$ is an automorphism, $\Sigma_N$ and $\Sigma_{\widehat{N}}$ are said to be equivalent. The next result, whose prove we omit, relates control sets of conjugated systems. 

\begin{proposition}
\label{conjugation}
    Let $\Sigma_N$ and $\Sigma_{\widehat{N}}$ be $\psi$-conjugated LCSs. It holds:
\begin{enumerate}
    \item[1.] If $D$ is a control set of $\Sigma_N$, there exists a control set $\widehat{D}$ of $\Sigma_{\widehat{N}}$ such that $\psi(D)\subset \widehat{D}$;
    \item[2.] If for some $y_0\in \inner \widehat{D}$ it holds that $\psi^{-1}(y_0)\subset\inner D$, then $D=\psi^{-1}(\widehat{D})$.
\end{enumerate}
    
\end{proposition}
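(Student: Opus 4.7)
The starting point is the trajectory-level consequence of $\psi$-conjugation: uniqueness of solutions of ODEs combined with the hypotheses $\psi_*\circ\XC=\hat{\XC}\circ\psi$ and $\psi_*\circ Y^j=\hat{Y}^j\circ\psi$ forces $\psi(\phi(S,x,{\bf u}))=\hat{\phi}(S,\psi(x),{\bf u})$ for every $S\geq 0$, $x\in N$ and ${\bf u}\in\UC$, and hence $\psi(\OC^+(x))=\hat{\OC}^+(\psi(x))$. Continuity of $\psi$ moreover gives $\psi(\cl\OC^+(x))\subset \cl\hat{\OC}^+(\psi(x))$.

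For item 1, I would check that $\psi(D)$ satisfies the weak-invariance and approximate-controllability axioms (i) and (ii) of Definition \ref{controlset} inside $\widehat N$. Weak invariance is obtained by pushing forward a trajectory in $D$ guaranteed by the weak invariance of $D$; approximate controllability transports through the inclusion above together with continuity of $\psi$. A standard Zorn's-lemma argument then exhibits a maximal set $\widehat D\supset\psi(D)$ satisfying (i)--(iii), which is the desired control set.

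For item 2, set $D':=\psi^{-1}(\widehat D)$. Since $\psi(D)\subset\widehat D$ we have $D\subset D'$, so by maximality of $D$ it suffices to prove that $D'$ itself satisfies (i) and (ii); equality $D=D'$ will then follow. Weak invariance of $D'$ is immediate: if $x\in D'$, choose ${\bf u}$ with $\hat\phi(\R^+,\psi(x),{\bf u})\subset \widehat D$ and lift via the conjugation relation. The nontrivial ingredient is approximate controllability of $D'$, and this is where the hypothesis $\psi^{-1}(y_0)\subset \inner D$ is essential. For $x_1,x_2\in D'$, the standard property $\inner\widehat D\subset\hat\OC^+(\hat y)$ (valid under LARC for every $\hat y\in\widehat D$) produces a trajectory from $\psi(x_1)$ hitting $y_0$ exactly; its lift ends at some $z_1\in\psi^{-1}(y_0)\subset \inner D$. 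From $z_1\in\inner D$, approximate controllability inside $D$ yields $D\subset \cl\OC^+(z_1)\subset \cl\OC^+(x_1)$. To reach beyond $D$ and actually hit $x_2$, I would combine $\psi(x_2)\in\cl\hat\OC^+(y_0)$ with the translation structure $\OC^+_S(x)=\OC^+_S(e)\varphi_S(x)$ of Proposition \ref{propertiesLCSs}(3): by varying the base-point in the whole fiber $\psi^{-1}(y_0)\subset \inner D$ one obtains enough freedom in the $\ker\psi$-direction to approximate $x_2$ exactly.

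\textbf{Main obstacle.} The delicate step is controlling the $\ker\psi$-component of the lifted trajectories. A priori, a sequence $\hat\phi(t_n,y_0,{\bf v}_n)\to\psi(x_2)$ lifts to trajectories $\phi(t_n,z_1,{\bf v}_n)$ whose $\psi$-images converge to $\psi(x_2)$ but which may sit in varying cosets of $\ker\psi$. The openness of $\psi$ as a surjective Lie group homomorphism, the LARC in $\Sigma_N$ (which guarantees that directions tangent to $\ker\psi$ are reachable near $\inner D$), and crucially the assumption that the entire fiber $\psi^{-1}(y_0)$—not merely one point of it—lies inside the open set $\inner D$, must be combined to supply the required slack for this adjustment. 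Once approximate controllability of $D'$ is in hand, maximality of $D$ closes the argument.
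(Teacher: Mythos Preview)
The paper explicitly omits the proof of this proposition, so there is no argument of the authors to compare against; I can only assess your plan on its own merits.

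Item 1 is handled exactly as one expects and is correct.

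For item 2, your reduction to showing that $D':=\psi^{-1}(\widehat D)$ satisfies (i) and (ii) of Definition~\ref{controlset}, followed by an appeal to maximality of $D$, is the right strategy, and the verification of weak invariance is fine. Your plan for approximate controllability already contains the decisive idea; let me indicate how the ``obstacle'' closes more cleanly than you suggest. Given $x_1,x_2\in D'$ and a neighbourhood $U$ of $x_2$, openness of $\psi$ makes $\psi(U)$ a neighbourhood of $\psi(x_2)\in\widehat D\subset\cl\hat\OC^+(y_0)$, so there exist $t>0$, ${\bf v}\in\UC$ and $u\in U$ with $\hat\phi(t,y_0,{\bf v})=\psi(u)$. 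Now the translation relation $\phi_{t,{\bf v}}\circ R_h=R_{\varphi_t(h)}\circ\phi_{t,{\bf v}}$ (equation~(\ref{prop})), together with $\varphi_t(\ker\psi)=\ker\psi$ (immediate from $\psi$-conjugation of the drifts), shows that $z\mapsto\phi(t,z,{\bf v})$ carries the fibre $\psi^{-1}(y_0)=z_1\ker\psi$ \emph{onto} the fibre $\psi^{-1}(\psi(u))$. Hence some $z\in\psi^{-1}(y_0)$ satisfies $\phi(t,z,{\bf v})=u$. Since you have already argued that $\psi^{-1}(y_0)\subset\inner D\subset\OC^+(z_1)\subset\OC^+(x_1)$, this gives $u\in\OC^+(x_1)\cap U$. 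So no approximation in the $\ker\psi$-direction is needed: the target fibre is hit exactly, and your worry about ``varying cosets'' disappears.

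One caveat worth making explicit: you invoke $\inner\widehat D\subset\hat\OC^+(\hat y)$ for $\hat y\in\widehat D$ and exact controllability in $\inner D$. Both require local accessibility (LARC), which is not listed among the hypotheses of the proposition. This is harmless in context, since LARC is a standing assumption throughout the paper and the proposition is only applied under it, but it should be flagged.
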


\section{Control sets of LCSs}

In this section, we show that the LARC is enough to assure the existence of control sets with nonempty interiors for LCSs whose associated drift has compact set of singularities. Moreover, we show that such a control set is unique and contains the identity element in its boundary.

We start with a technical lemma which will help us ahead.

\begin{lemma}
\label{manycontrols}
    Let $\gamma:[0, 1]\rightarrow N$ be a continuous path and assume that for any $t\in [0, 1]$, there exists a control set $D_{\gamma(t)}$ of $\Sigma_N$ such that 
    $\gamma(t)\in \inner D_{\gamma(t)}$. Then $D_{\gamma(t)}=D_{\gamma(0)}$ for all $t\in [0, 1]$.
\end{lemma}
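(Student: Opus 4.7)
The plan is to run a standard connectedness argument. Define
$$T = \{t\in[0,1] : D_{\gamma(t)}=D_{\gamma(0)}\},$$
which contains $0$ and is therefore nonempty. I would show $T$ is both open and closed in $[0,1]$, so that connectedness of $[0,1]$ forces $T=[0,1]$.

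The crux is an auxiliary fact I would establish first: \emph{if two control sets $D,D'$ of $\Sigma_N$ share a point $x\in\inner D\cap\inner D'$, then $D=D'$}. To prove this, I would show that $D\cup D'$ satisfies the weak invariance and approximate controllability conditions of Definition \ref{controlset}, so that maximality of $D$ (and of $D'$) yields $D\cup D'=D=D'$. Weak invariance of $D\cup D'$ is immediate: each of its points lies in one of the two weakly invariant control sets. For approximate controllability, given $p\in D$ and $q\in D'$ (the other cases being analogous or trivial), approximate controllability inside $D$ gives $x\in\cl(\OC^{+}(p))$; Remark \ref{density}, which relies on the LARC, upgrades this to $x\in\cl(\inner\OC^{+}(p))$, so the open neighborhood $\inner D'$ of $x$ meets $\OC^{+}(p)$ at some point $x'\in\inner D'$. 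Approximate controllability inside $D'$ then yields $q\in\cl(\OC^{+}(x'))$, and the semigroup property of reachable sets obtained by concatenating piecewise constant controls (cf.\ Proposition \ref{propertiesLCSs}) gives $\OC^{+}(x')\subset\OC^{+}(p)$, whence $q\in\cl(\OC^{+}(p))$.

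Granting this auxiliary fact, openness and closedness of $T$ are both short. For openness, if $t_0\in T$ then $\gamma(t_0)\in\inner D_{\gamma(0)}$; continuity of $\gamma$ keeps $\gamma(t)$ inside $\inner D_{\gamma(0)}$ for $t$ near $t_0$, and since by hypothesis $\gamma(t)\in\inner D_{\gamma(t)}$, the auxiliary fact forces $D_{\gamma(t)}=D_{\gamma(0)}$. For closedness, given a sequence $t_n\to t^{*}$ with $t_n\in T$, continuity places $\gamma(t_n)$ in $\inner D_{\gamma(t^{*})}$ for large $n$, while $\gamma(t_n)\in\inner D_{\gamma(t_n)}=\inner D_{\gamma(0)}$; the auxiliary fact again yields $D_{\gamma(t^{*})}=D_{\gamma(0)}$.

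The main obstacle is precisely the auxiliary fact on merging control sets sharing an interior point. Its proof rests on two ingredients already available: the density of $\inner\OC^{+}(p)$ in $\OC^{+}(p)$ guaranteed by the LARC (Remark \ref{density}), and the semigroup structure of reachable sets recorded in Proposition \ref{propertiesLCSs}. Nothing beyond this, and in particular no property specific to nilpotent groups, should be required.
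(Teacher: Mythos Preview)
Your approach is correct and essentially the same as the paper's: both run a connectedness argument on $[0,1]$ resting on the standard fact that two control sets sharing an interior point coincide (the paper leaves this fact implicit, as it is well known from \cite{FCWK}, while you spell it out). One minor remark: your detour through Remark~\ref{density} and the LARC in the auxiliary step is unnecessary and, strictly speaking, imports a hypothesis the lemma does not assume---since $\inner D'$ is already an open neighborhood of $x\in\cl(\OC^{+}(p))$, it meets $\OC^{+}(p)$ directly, with no density upgrade needed.
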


\begin{proof} For any $t\in [0, 1]$, there exists, by continuity, $\varepsilon>0$ such that $\gamma\left([t, t+\varepsilon)\right)\subset \inner D_{\gamma(t)}$. Consequently, if 
$$T_0:=\max\{T\in [0, 1]; \;D_{\gamma(t)}=D_{\gamma(0)}, \forall t\in [0, T]\},$$
then necessarily $T_0=1$, which shows the result.
\end{proof}

\subsection{The regular case}

In this section we prove the existence of a unique control set with nonempty interior for regular systems.

\begin{theorem}
\label{main1}
    A regular LCS on a nilpotent connected Lie group $N$ satisfying the LARC admits a unique control set with a nonempty interior.
\end{theorem}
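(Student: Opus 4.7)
The plan is to prove the theorem in two parts: existence and uniqueness, both leveraging the family of diffeomorphisms $f_S$ from Proposition \ref{difeolinear}.

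For existence, fix any $S\in\R\setminus\ZC$, so that $f_S$ is a diffeomorphism of $N$. The LARC guarantees $\inner\OC^+_S(e)\neq\emptyset$, and picking $y$ in this interior one sets $x=f_S^{-1}(y)$. From $f_S(x)=x\varphi_S(x^{-1})=y$ one gets $x=y\varphi_S(x)$, and Proposition \ref{propertiesLCSs}(3) yields $x\in\inner\OC^+_S(e)\varphi_S(x)=\inner\OC^+_S(x)\subset\inner\OC^+(x)$. Since $x$ lies in the interior of its own forward reachable set in finite time, a standard maximality argument produces a control set $D$ with $x\in\inner D$.

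For uniqueness, let $D_1,D_2$ be two control sets with nonempty interior and choose $x_i\in\inner D_i$. Since $x_i\in\inner\OC^+(x_i)$, a standard LARC argument provides times $T_i>0$ with $x_i\in\inner\OC^+_{T_i}(x_i)$, whence $y_i:=f_{T_i}(x_i)\in\inner\OC^+_{T_i}(e)$. By the monotonicity in Proposition \ref{propertiesLCSs}(2), both $y_1,y_2$ belong to $\inner\OC^+_S(e)$ for every $S\geq\max\{T_1,T_2\}$, and I would pick such an $S$ inside the same connected component of $\R\setminus\ZC$ as the $T_i$ (possibly after replacing each $T_i$ by a slightly larger value --- harmless by openness and monotonicity of the reachable sets).

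The heart of the proof is then an application of Lemma \ref{manycontrols}. First, connect $y_1$ to $y_2$ by a continuous path $\tilde\gamma\colon[0,1]\to\inner\OC^+_S(e)$: this is possible since $\OC^+_S(e)$ is path-connected (the endpoint map at time $S$ from the convex set $\UC$ is continuous and sends $0$ to $e$), and $\inner\OC^+_S(e)$ is dense in $\OC^+_S(e)$ by Remark \ref{density}, which allows one to perturb a given path into the interior. Pulling back through $f_S$ gives a continuous path $\gamma=f_S^{-1}\circ\tilde\gamma$ in $N$ with $\gamma(t)\in\inner\OC^+_S(\gamma(t))$ for every $t$, so each $\gamma(t)$ sits in the interior of some control set. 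Lemma \ref{manycontrols} then forces $D_{\gamma(0)}=D_{\gamma(1)}$. A similar interpolation argument along $s\mapsto f_s^{-1}(y_i)$, with $s$ ranging in a subinterval of $\R\setminus\ZC$ connecting $T_i$ to $S$, identifies $x_i=f_{T_i}^{-1}(y_i)$ with $\gamma(i)=f_S^{-1}(y_i)$ inside the same control set, so $D_1=D_{\gamma(0)}=D_{\gamma(1)}=D_2$.

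The main obstacle will be the construction of $\tilde\gamma$ entirely inside the dense open set $\inner\OC^+_S(e)$: one needs to perturb the underlying controls so that every intermediate trajectory endpoint at time $S$ is normally accessible. The remaining pieces --- handling the discrete set $\ZC$, the two continuity arguments, and the final concatenation --- should be routine.
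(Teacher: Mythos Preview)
Your existence argument (picking $S\in\R\setminus\ZC$, setting $x=f_S^{-1}(y)$ for $y\in\inner\OC^+_S(e)$, and deducing $x\in\inner\OC^+_S(x)$) is exactly the paper's Step~1. The overall architecture of your uniqueness argument --- pull back a path in $\inner\OC^+_S(e)$ through $f_S^{-1}$ and invoke Lemma~\ref{manycontrols} --- is also the paper's idea. There is, however, a real gap.

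The problem is the phrase ``pick such an $S$ inside the same connected component of $\R\setminus\ZC$ as the $T_i$''. Regularity ($\det A\neq0$) only excludes the eigenvalue $0$; the derivation $A$ may still have purely imaginary eigenvalues, so $\ZC$ may be an infinite discrete subset of $(0,\infty)$ and there is no reason for $T_1$ and $T_2$ to lie in the same component of $I=\R\setminus\ZC$. Your interpolation $s\mapsto f_s^{-1}(y_i)$ along $[T_i,S]$ breaks down the moment the interval meets $\ZC$, and perturbing $T_i$ slightly cannot move it across a point of $\ZC$. This is not routine: the paper spends its entire Step~3 on precisely this issue. The key input you are missing is the cocycle identity from Proposition~\ref{difeolinear}(3),
\[
f_{S_0+S_1}(x)=f_{S_0}(x)\,\varphi_{S_0}\bigl(f_{S_1}(x)\bigr),
\]
together with Proposition~\ref{propertiesLCSs}(4). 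With these, one shows inductively that if $x\in f_{S/n}^{-1}\bigl(\inner\OC^+_{S/n}(e)\bigr)$ for $S/n$ in the first component $(0,\varepsilon)$ of $I$, then $f_S(x)\in\inner\OC^+_S(e)$ as well, so the control set attached to the first component already contains $f_S^{-1}\bigl(\inner\OC^+_S(e)\bigr)$ for \emph{every} $S\in I$. Without this bridge across components, your two paths starting from $D_1$ and $D_2$ may never be joinable.

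A secondary point: the claim that a path in $\OC^+_S(e)$ can be perturbed into the dense open subset $\inner\OC^+_S(e)$ is not automatic from density alone. The paper avoids this by citing \cite[Theorem~3]{Jurd} directly for the connectedness of $\inner\OC^+_S(e)$; you should either do the same or make the normal-accessibility perturbation argument precise rather than defer it as the ``main obstacle''.
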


\begin{proof} Let $\Sigma_N$ be a regular LCS on $N$ with drift $\XC$. Denote as previously by $\{\varphi_S\}_{S\in \R}$ the flow of $\XC$. By Proposition \ref{difeo}, there exists an open and dense set $I:=\R\setminus \ZC$  such that
$$\forall S\in I, \hspace{.5cm }x\in N\mapsto f_S(x)=x\varphi_S(x^{-1})\hspace{.5cm}\mbox{ is a diffeomorphism.}$$
The rest of the proof is divided into the next four steps.

{\bf Step 1:} For any $S\in I$ and $x\in f_S^{-1}\left(\inner\OC_S^+(e)\right)$ there exists a control set $D_{S, x}$ satisfying $x\in\inner D_{S, x}$. 

By the LARC, it holds that 
$$\forall S>0, \hspace{1cm} \inner\OC^+_{S}(e)\neq\emptyset.$$
Therefore, for all $S\in I$ and $y\in \inner\OC^+_{S}(e)$,  there exists $x\in N$ such that $f_{S}(x)=y$. Writing $y=\phi(S, e, {\bf u})$ we get  
$$x\varphi_{S}(x^{-1})=f_{S}(x)=y=\phi(S, e, {\bf u})\hspace{.5cm}\iff\hspace{.5cm}x=\phi(S, e, {\bf u})\varphi_{S}(x)\in \inner\OC^+_{S}(e)\varphi_{S}(x)=\inner\OC^+_{S}(x),$$
showing, in particular, that $\OC^+(x)$ is open. By general results (see for instance \cite[Chapter 3]{FCWK}), there exist a control set $D_{S, x}$ satisfying $x\in\inner D_{S, x}$, showing step 1.

{\bf Step 2:} For any $S\in I$, there exists a control set $D_S$ satisfying 
$$f_S^{-1}\left(\inner\OC_S^+(e)\right)\subset \inner D_S.$$

By the previous step, it is enough to show that the control sets $D_{S, x}$ are independent of $x\in f_S^{-1}\left(\inner\OC_S^+(e)\right)$. 

However, the fact that $\inner\OC_S^+(e)$ is connected (\cite[Theorem 3]{Jurd}) and that, for any $S\in I$, the map $f_S$ is a diffeomorphism, imply that the set  
        $f_S^{-1}\left(\inner\OC_S^+(e)\right)$ is open and connected, and hence path connected. Consequently, for any $x, y\in f_S^{-1}\left(\inner\OC_S^+(e)\right)$, there exists a continuous path $\gamma:[0, 1]\rightarrow N$ such that $\gamma(0)=x$, $\gamma(1)=y$ and $\gamma(t)\in f_{S}^{-1}\left(\inner\OC_S^+(e)\right)$ for any $t\in [0, 1]$. By Step 1, $\gamma(t)\in \inner D_{S, \gamma(t)}$ which by Lemma \ref{manycontrols} implies necessarily  $D_{S, x}=D_{S, y}$, and hence 
        $$D_S=D_{S, x}, \hspace{.5cm}\forall  x\in f_S^{-1}\left(\inner\OC_S^+(e)\right),$$
        showing the step 2.

{\bf Step 3:} There exists a control set $D$ such that 
$$\forall S\in I, \hspace{1cm}f_S^{-1}\left(\inner\OC_S^+(e)\right)\subset \inner D.$$

By the previous step, we only have to show that $D_{S_0}=D_{S_1}$ for any $S_0, S_1\in I$. Assume first that $[S_0, S_1]\subset I$ and let $x\in \inner\OC_{S_0}^+(e)$. Define the curve
$$\gamma:[0, 1]\rightarrow N, \hspace{1cm} \gamma(t):=f^{-1}_{tS_1+(1-t)S_0}(x).$$
Since for all $t\in (0, 1)$ we have that $tS_1+(1-t)S_0\in I$, we get that
$$\gamma(t)=f_{tS_1+(1-t)S_0}^{-1}(x)\in f_{tS_1+(1-t)S_0}^{-1}\left(\inner\OC_{S_0}^+(e)\right)\subset f_{tS_1+(1-t)S_0}^{-1}\left(\inner\OC_{tS_1+(1-t)S_0}^+(e)\right)\subset \inner D_{tS_1+(1-t)S_0},$$
where for the first inclusion we used Proposition \ref{propertiesLCSs}. By Lemma \ref{manycontrols}, we conclude that $D_{S_0}=D_{S_1}$, showing that control sets $D_S$ depend only on the connected components of $I$.

Let $(0, \varepsilon)$ be a connected component of $I$, and denote by $D$ the control set satisfying
$$\forall S\in (0, \varepsilon), \hspace{1cm} f_S^{-1}\left(\inner\OC_S^+(e)\right)\subset D.$$
Take an arbitrary $S\in I$ and choose $n\in \N$ large enough, such that $S<n\varepsilon$, and consider  
$$x\in f_{\frac{S}{n}}^{-1}\left(\inner\OC^+_{\frac{S}{n}}(e)\right)\subset \inner D.$$
Using Proposition \ref{difeolinear}, we get 
$$f_{2\frac{S}{n}}(x)=f_{\frac{S}{n}}(x)\varphi_{\frac{S}{n}}\left(f_{\frac{S}{n}}(x)\right)\in \inner\OC^+_{\frac{S}{n}}(e)\varphi_{\frac{S}{n}}\left(\inner\OC^+_{\frac{S}{n}}(e)\right)=\inner\OC^+_{2\frac{S}{n}}(e),$$
$$f_{3\frac{S}{n}}(x)=f_{\frac{S}{n}}(x)\varphi_{\frac{S}{n}}\left(f_{2\frac{S}{n}}(x)\right)\in \inner\OC^+_{\frac{S}{n}}(e)\varphi_{\frac{S}{n}}\left(\inner\OC^+_{2\frac{S}{n}}(e)\right)=\inner\OC^+_{3\frac{S}{n}}(e),$$
and inductively,
$$f_S(x)=f_{n\frac{S}{n}}(x)=f_{\frac{S}{n}}(x)\varphi_{\frac{S}{n}}\left(f_{(n-1)\frac{S}{n}}(x)\right)\in \inner\OC^+_{\frac{S}{n}}(e)\varphi_{\frac{S}{n}}\left(\inner\OC^+_{(n-1)\frac{S}{n}}(e)\right)=\inner\OC^+_{n\frac{S}{n}}(e)=\inner\OC^+_S(e).$$
Consequently, 
$$ x\in f_{S}^{-1}\left(\inner\OC^+_S(e)\right)\subset \inner D_S,$$
which implies $D_{S}=D$ for any $S\in I$, showing step 3. 

{\bf Step 4:} $D$ is the only control set of $\Sigma_N$.

By the previous items, we only have to show that any arbitrary control set $D'$ with a nonempty interior satisfies
$$\inner D'\cap f_{S}^{-1}\left(\inner\OC^+_S(e)\right)\neq \emptyset
, \hspace{.5cm}\mbox{ for some }\hspace{.5cm}S\in I.$$
  
   Let us consider $x\in \inner D'$ and $S_0>0$. Since $\inner \OC^+_{S_0}(x)$ is dense in $\OC^+_{S_0}(x)$ (see Remark \ref{density}), we get
    $$\inner \OC^+_{S_0}(x)\cap\inner D'\neq\emptyset.$$
    For $y$ in this intersection, there exists, by exact controllability, $S_1>0$ and ${\bf u}\in \UC$ such that 
    $\phi(S_1, y, {\bf u})=x$ and hence
    $$x=\phi(S_1, y, {\bf u})\in \phi_{S_1, {\bf u}}(\inner \OC^+_{S_0}(x))\subset \inner \OC^+_{S_0+S_1}(x)=\inner \OC^+_{S_0+S_1}(e)\varphi_{S_0+S_1}(x),$$
    implying that
    $$x\varphi_{S_0+S_1}(x^{-1})\in \inner \OC^+_{S_0+S_1}(e).$$
    On the other hand, the fact that 
$I$ is dense in $(0, +\infty)$ assures, by continuity, the existence of $S\in I$, with $S>S_0+S_1$, such that 
    $$x\varphi_S(x^{-1})\in\inner \OC^+_{S_0+S_1}(e)\subset \inner \OC^+_S(e),$$
    and so, $$f_S(x)=x\varphi_S(x^{-1})\in \inner\OC^+_S(e)\hspace{.5cm}\iff\hspace{.5cm} x\in f_S^{-1}(\inner\OC^+_S(e)),$$
    which implies step 4. and concludes the proof of the theorem.
\end{proof}

As a direct corollary, we get the following.

\begin{corollary}
\label{coro}
    Under the assumptions of the previous theorem, the only control set $D$ of a LCS $\Sigma_N$ on $N$ satisfies $e\in \mathrm{cl}(D)$.
\end{corollary}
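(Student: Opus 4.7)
The plan is to produce a sequence in $\inner D$ converging to $e$, using the inclusion $f_S^{-1}(\inner\OC^+_S(e))\subset\inner D$ (for every $S\in I$) that was already established in Step~3 of the proof of Theorem~\ref{main1}. So all the work of the corollary amounts to checking that $e$ is accessible as a limit of preimages under some $f_S^{-1}$.

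First I would record two elementary facts. Since $\varphi_S\in\mathrm{Aut}(N)$, we have $\varphi_S(e)=e$ for every $S$, whence $f_S(e)=e\cdot\varphi_S(e^{-1})=e$. Next, the continuity of the trajectories $s\mapsto\phi(s,e,\mathbf{u})$ at $s=0$ shows that $\OC^+_s(e)$ accumulates at $e$ as $s\to 0^+$, and therefore $e\in\mathrm{cl}(\OC^+_{\leq S}(e))=\mathrm{cl}(\OC^+_S(e))$ by Proposition~\ref{propertiesLCSs}.1. Combined with Remark~\ref{density}, which under the LARC yields $\mathrm{cl}(\OC^+_S(e))=\mathrm{cl}(\inner\OC^+_S(e))$, this gives $e\in\mathrm{cl}(\inner\OC^+_S(e))$ for every $S>0$.

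With these two facts in hand, I would fix any $S\in I=\R\setminus\ZC$, so that $f_S$ is a diffeomorphism of $N$ by Proposition~\ref{difeolinear}.4 and $f_S(e)=e$. Choosing a sequence $y_n\in\inner\OC^+_S(e)$ with $y_n\to e$ and setting $x_n:=f_S^{-1}(y_n)$, Step~3 of the proof of Theorem~\ref{main1} gives $x_n\in\inner D$, while continuity of $f_S^{-1}$ at $e$ forces $x_n\to e$. Hence $e\in\mathrm{cl}(\inner D)\subset\mathrm{cl}(D)$.

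I do not anticipate any real obstacle: the corollary is essentially a by-product of the proof of Theorem~\ref{main1}. The only mildly delicate point is making sure the approximating points $y_n$ can be chosen inside the \emph{interior} of $\OC^+_S(e)$ rather than merely in its closure, but this is precisely what the density statement in Remark~\ref{density} supplies.
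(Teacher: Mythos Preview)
Your proposal is correct and follows essentially the same approach as the paper: both use $f_S(e)=e$, the density $\mathrm{cl}(\OC^+_S(e))=\mathrm{cl}(\inner\OC^+_S(e))$ from Remark~\ref{density}, and the inclusion $f_S^{-1}(\inner\OC^+_S(e))\subset\inner D$ from Step~3 of Theorem~\ref{main1}. The only difference is cosmetic: the paper writes the argument as a single chain of set inclusions, using that the homeomorphism $f_S^{-1}$ commutes with closure, whereas you spell this out via a convergent sequence.
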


\begin{proof}
    In fact, since $f_S(e)=e$, for any $S\in I$, we get that
    $$e=f_S^{-1}(e)\in f_S^{-1}\left(\mathrm{cl}\left(\OC_S^+(e)\right)\right)=f_S^{-1}\left(\mathrm{cl}\left(\inner\OC_S^+(e)\right)\right)=\mathrm{cl}\left(f_S^{-1}\left(\inner\OC_S^+(e)\right)\right)\subset \mathrm{cl}(D).$$
\end{proof}

\subsection{The general case}

In this section we conclude the general case where the set of singularities of the drift is a compact subset.

\begin{theorem}
    A LCS on a nilpotent Lie group $N$ satisfying the LARC and whose associated drift has compact set of singularities admits a unique control set with nonempty interior. Moreover, its closure contains the singularities of the drift.
\end{theorem}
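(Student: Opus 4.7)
The plan is to reduce to the regular case of Theorem~\ref{main1} by quotienting out the compact set of singularities. First I would identify $F := \fix(\varphi) = \exp \ker A$ using Proposition~\ref{difeolinear}(1). Since $\ker A$ is closed under the bracket (because $A[X,Y]=[AX,Y]+[X,AY]$ vanishes for $X,Y\in\ker A$), $F$ is a connected closed subgroup of $N$, compact by hypothesis. Moreover, every compact connected subgroup of a connected nilpotent Lie group is central: $\Ad(N)$ is contained in the unipotent subgroup of $\mathrm{GL}(\fn)$ (as $\ad X$ is nilpotent for every $X\in\fn$), and a real unipotent group admits no nontrivial compact subgroups, so $\Ad(F)=\{I\}$ and $F\subset Z(N)$. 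In particular $F$ is normal in $N$.

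Next I would form the quotient $\widehat{N}:=N/F$ with canonical projection $\psi: N\to\widehat{N}$. The derivation $A$ descends to an invertible derivation $\widehat{A}$ on $\widehat{\fn} := \fn/\ker A$, so $\XC$ descends to a regular linear vector field $\widehat{\XC}$ on $\widehat{N}$, while the $Y^j$'s project to right-invariant fields $\widehat{Y}^j$. The resulting LCS $\Sigma_{\widehat{N}}$ is regular, $\psi$-conjugated to $\Sigma_N$, and inherits the LARC. Theorem~\ref{main1} and Corollary~\ref{coro} then furnish a unique control set $\widehat{D}$ of $\Sigma_{\widehat{N}}$ with nonempty interior, satisfying $\widehat{e}\in\cl\widehat{D}$.

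The heart of the argument is a symmetry observation that lets me invoke Proposition~\ref{conjugation}(2). For each $z\in F$, the left translation $L_z$ commutes with every vector field of the system: centrality of $z$ gives the commutation with the flows $L_{\exp(tY^j(e))}$ of the right-invariant $Y^j$'s, and $z\in\fix(\varphi)$ together with $\varphi_S$ being an automorphism yields $\varphi_S(zx)=\varphi_S(z)\varphi_S(x)=z\varphi_S(x)$, the commutation with the drift flow. Hence $L_z$ sends trajectories to trajectories and maps control sets to control sets. If $D'$ is any control set of $\Sigma_N$ with nonempty interior and $x\in\inner D'$, then for $z$ sufficiently close to $e$ in $F$ we have $zx\in\inner D'$, so $L_z(D')\cap D'\neq\emptyset$ and by maximality $L_z(D')=D'$. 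The stabilizer $\{z\in F : L_z(D')=D'\}$ is therefore an open, hence closed, subgroup of the connected group $F$, so it equals $F$. This yields $xF\subset\inner D'$, and Proposition~\ref{conjugation}(2) applied with $y_0:=\psi(x)\in\inner\widehat{D}$ gives $D'=\psi^{-1}(\widehat{D})$, which proves uniqueness.

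For existence I would directly verify the axioms for $D:=\psi^{-1}(\widehat{D})$: weak invariance comes from lifting the control that certifies weak invariance of $\widehat{D}$; approximate controllability between two points of $D$ is deduced from the corresponding property in $\widehat{D}$ by lifting the approximating sequence to $N$ (using compactness of the fiber to extract a convergent subsequence) and then absorbing the fiber discrepancy via the $L_z$-symmetry established above; maximality is inherited from $\widehat{D}$ through Proposition~\ref{conjugation}(1). Finally, the relation $\widehat{e}\in\cl\widehat{D}$ combined with openness of the continuous surjection $\psi$ gives $F = \psi^{-1}(\widehat{e}) \subset \psi^{-1}(\cl\widehat{D}) = \cl\psi^{-1}(\widehat{D}) = \cl D$, so the singularities of $\XC$ lie in $\cl D$. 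The main obstacle I anticipate is the fiber-symmetry step, which requires the compactness, connectedness, and centrality of $F$ working together; all other steps are either routine reductions to the regular case or direct applications of Proposition~\ref{conjugation}.
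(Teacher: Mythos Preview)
Your reduction to the regular case via the quotient $N\to N/F$ is exactly the paper's strategy, and your proof that $F$ is central is correct. Your \emph{uniqueness} argument via the $L_z$-symmetry and the open-stabilizer trick is a clean alternative to the paper's path-connectedness argument (Lemma~\ref{manycontrols}): both yield $\psi^{-1}(\psi(x))\subset\inner D'$ for any control set $D'$ with nonempty interior, after which Proposition~\ref{conjugation}(2) finishes.

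The \emph{existence} argument, however, has a real gap. You lift an approximating sequence from $\widehat{D}$ and use compactness of the fiber to obtain $yz\in\cl(\OC^+(x))$ for some $z\in F$, then claim to ``absorb the fiber discrepancy via the $L_z$-symmetry''. But the symmetry acts on the \emph{starting point}, not the endpoint: $L_{z^{-1}}\big(\OC^+(x)\big)=\OC^+(z^{-1}x)$, so all you get is $y\in\cl(\OC^+(z^{-1}x))$. To close the loop you would still need $z^{-1}x\in\cl(\OC^+(x))$ for \emph{every} $z\in F$, which is precisely the nontrivial statement you are trying to prove and is not delivered by the symmetry alone.

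The paper supplies the missing idea. From $\psi(x)\in\inner\widehat{\OC}^+_S(\psi(x))$ one gets $xg\in\inner\OC^+_S(x)$ for some $g\in F$; since $F$ is a compact abelian Lie group (a torus), elements of finite order are dense, so one may perturb $g$ to satisfy $g^n=e$. Concatenating the corresponding trajectory $n$ times (using your own relation $\OC^+_S(xg)=\OC^+_S(x)\,g$, since $g\in\fix(\varphi)$) gives $x=xg^n\in\inner\OC^+_{nS}(x)$, and now the standard Colonius--Kliemann machinery produces a control set with $x$ in its interior. This finite-order/periodic-trajectory step is what your sketch is missing; once you insert it, either your stabilizer argument or the paper's path argument completes the proof.
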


\begin{proof}
Let $\Sigma_N$ be a LCS whose drift $\XC$ has compact set of singularities, which we denote here by $H$. Since compact subgroups of nilpotent Lie groups are central (see \cite[Theorem 1.6]{ALEB}) it holds that $H\subset Z(N)$. Consequently, $\widehat{N}:=N / H$ is a nilpotent Lie group. Moroever, the fact that $H$ is invariant by the flow of $\XC$ assures the existence of a LCS $\Sigma_{\widehat{N}}$ on $\widehat{N}$ that is $\pi$-conjugated to $\Sigma_N$, where $\pi:N\rightarrow \widehat{N}$ is the canonical projection (see \cite[Proposition 4]{JPh1}). In particular, the derivation $A'$ associated with the drift $\widehat{\XC}$ of $\Sigma_{\widehat{N}}$ satisfies
    $$(d\pi)_{e}\circ A=A'\circ (d\pi)_{e}\hspace{.5cm}\implies\hspace{.5cm}\det A'\neq 0,$$
    where $A$ is the derivation associated with $\XC$.

    By Theorem \ref{main1}, the LCS $\Sigma_{\widehat{N}}$ admits a unique control set $\widehat{D}$ with nonempty interior. 

    Let $x\in\pi^{-1}(\inner \widehat{D})$ and consider, as in Step 4 of the previous result, $S>0$ such that $\pi(x)\in\inner\widehat{\OC}_S^+(\pi(x))$. By the conjugation property,
    $$\pi^{-1}\left(\inner\widehat{\OC}_S^+(\pi(x))\right)=\bigcup_{g\in H}\inner\OC_S^+(xg)=\bigcup_{g\in H}\inner\OC_S^+(x)g,
    $$
    and hence, there exists $g\in H$ satisfying $xg\in \inner\OC_S^+(x)$. Moreover, the fact that the elements of finite order are dense in a compact group, together with the openness of $\inner\OC_S^+(x)$, allows us to assume w.l.o.g. that $$xg\in \inner\OC_S^+(x), \hspace{.5cm}\mbox{ with }\hspace{.5cm}g^n=e, \hspace{.5cm}\mbox{ for some }\hspace{.5cm}n\in \N.$$
    Let then ${\bf u}\in\UC$ such that $\phi(S, x, {\bf u})=xg$ and extend it $S$-periodically. By concatenation, one gets that
    $$\forall k\in\N, \hspace{.5cm}\phi(kS, x, {\bf u})=xg^k\hspace{.5cm}\implies\hspace{.5cm}x=\phi(nS, x, {\bf u})\in\inner\OC_{nS}^+(x),$$
which assures the existence of a control set $D_x$ with $x\in\inner D_x$. 

Since the previous holds for any $x\in\pi^{-1}(\inner\widehat{D})$, for any $h\in H$ there exists a control set $D_{xh}$ with $xh\in\inner D_{xh}$. However, the fact that $H$ is a connected Lie group (see Proposition \ref{difeolinear}) implies that, for any $h_1, h_2\in H$, there exists a continuous curve $\gamma:[0, 1]\rightarrow H$ such that $\gamma(0)=h_1$, $\gamma(1)=h_2$ and $$x\gamma(t)\in xH=\pi^{-1}(\pi(x))\subset\pi^{-1}(\inner\widehat{D}),\hspace{.5cm}\forall t\in [0, 1].$$

Therefore, for any $t\in[0, 1]$ there exists a control set $D_{x\gamma(t)}$ with $x\gamma(t)\in\inner D_{x\gamma(t)}$, which by Lemma \ref{manycontrols} implies that $D=D_{x\gamma(t)}$ for all $t\in [0, 1]$. Therefore, $D$ is a control set of $\Sigma_N$ with nonempty interior, such that 
$$xH=\pi^{-1}(\pi(x))\subset\inner D\hspace{.5cm}\mbox{ and }\hspace{.5cm} \pi(x)\in \inner \widehat{D},$$
which by Proposition \ref{conjugation} implies that $D=\pi^{-1}(\widehat{D})$. Uniqueness follows direct from the previous equality. Also, by Corollary \ref{coro} and the fact that $\pi$ is a closed map, we get that
$$\pi(H)\subset \mathrm{cl}\left(\widehat{D}\right)\hspace{.5cm}\implies\hspace{.5cm} H\subset\pi^{-1}\left(\mathrm{cl}\left(\widehat{D}\right)\right)=\mathrm{cl}\left(\pi^{-1}(\widehat{D})\right)=\mathrm{cl}\left(D\right),$$
concluding the proof.
\end{proof}

\section{LCSs on the Heisenberg group}

In this section we give some examples of non-regular LCSs on the Heisenberg group.

The {\bf Heisenberg group} $\HB$ is by definition $\HB:=(\R^3, *)$ where the product is given by 
$$(x_1, y_1, z_1)*(x_2, y_2, z_2):=\left(x_1+x_2, y_1+y_2, z_1+z_2+\frac{1}{2}(x_2y_1-x_1y_2)\right).$$
Its Lie algebra $\fh$ is 
$\fh:=(\R^3, [\cdot, \cdot])$ where
$$[(\alpha_1, \beta_1, \gamma_1), (\alpha_2, \beta_2, \gamma_2)]:=\left(0, 0, \alpha_2\beta_1-\alpha_1\beta_2\right).$$

\begin{example} {\it Non-regular LCS on $\HB$ with an infinite number of one-point control sets}

The control-affine system 
\begin{flalign*}
&&\left \{\begin{array}
[c]{l}%
\dot{x}(S)=y(S)\\
\dot{y}(S)={\bf u}(S)\\
\dot{z}(S)=\frac{1}{2}x(S){\bf u}(S)
\end{array}\right., \hspace{.5cm}\mbox{ where }\Omega:=[\rho, \nu], \hspace{.5cm}\rho<0<\nu, &&\hspace{-1cm}\left(\Sigma_{\HB}\right)
\end{flalign*}
is a LCS on $\HB$ whose associated derivation and right-invariant vector field are given, respectively, as 
$$A=\left(\begin{array}{ccc}
    0 & 1 & 0 \\
    0 & 0 & 0 \\
    0 & 0 & 0
\end{array}\right)\hspace{.5cm}\mbox{ and }\hspace{.5cm}Y(x, y, z)=\left(0, 1, \frac{1}{2}x\right).$$
In particular, if $Y=Y(0, 0, 0)$ we have that
$$AY=(1, 0, 0)\hspace{.5cm}\mbox{ and }\hspace{.5cm} [AY, Y]=(0, 0, 1)\hspace{.5cm}\implies\hspace{.5cm}\fh=\mathrm{span}\{Y, AY, [AY, Y]\},$$
and $\Sigma_{\HB}$ satisfies the LARC. Moreover, $\det A=0$ and the system is non-regular. 

The plane $S=\{(x, 0, z), x, z\in\R\}$ is the set of singularities of the drift. In particular, any point in $S$ has to be contained in a control set of $\Sigma_{\HB}$. Our aim here is to show that any point in $S$ is in fact a control set of $\Sigma_{\HB}$. 

In order to show the previous, let us consider the diffeomorphism, 
$$f:\HB\rightarrow\R^3, \hspace{1cm}f(x, y, z)=\left(x, y, z-\frac{1}{6}xy\right),$$
we get that $\Sigma_{\HB}$ is equivalent to the system \footnote{The change from $\HB$ to $\R^3$ in the subscripts is to emphasize that $\Sigma_{\R^3}$ is not a linear control system.}
\begin{flalign*}
&&\left \{\begin{array}
[c]{l}%
\dot{x}(S)=y(S)\\
\dot{y}(S)={\bf u}(S)\\
\dot{z}(S)=\frac{1}{6}(2x(S){\bf u}(S)-y(S)^2)
\end{array}\right., &&\hspace{-1cm}\left(\Sigma_{\R^3}\right)
\end{flalign*}
whose solution for a constant control function ${\bf u}\equiv u$ is
$$\phi(S, P_0, {\bf u})=\left(x_0+y_0S+\frac{uS^2}{2}, y_0+Su, z_0+\frac{1}{6}(2x_0u-y_0^2)S\right).$$

Let us consider the function 
$$F:\R^3\rightarrow\R, \hspace{1cm}F(x, y, z):=6z\sigma+y(y^2-2x\sigma),$$
where $\sigma<\rho$ is a fixed real number. For any $u\in \Omega$ and $P_0=(x_0, y_0, z_0)$, it holds that:

If $u=0$, then 
$$F(\phi(S, P_0, 0))=F(x_0+Sy_0, y_0, z_0)=6z_0\sigma+y_0(y_0^2-2(x_0+ty_0)\sigma)$$
$$=F(P_0)-2Sy_0^2\sigma\geq F(P_0)
,\hspace{.5cm}\mbox{ since }\hspace{.5cm} \sigma<\rho<0,$$
with equality iff $y_0=0$.

On the other hand, if $u\neq 0$, then  $$g_u(S):=F(\phi(S, P_0, u))=6\left(z_0+\frac{1}{6}(2x_0u-y_0^2)\right)\sigma+(y_0+uS)\left((y_0+uS)^2-2\left(x_0+y_0t+\frac{uS^2}{2}\right)\sigma\right)$$
$$=(y_0+uS)^3+\sigma\left(6z_0-2x_0y_0-3y_0^2S-3y_0uS^2-u^2S^3\right)=\frac{u-\sigma}{u}(y_0+uS)^3+\sigma\left(6z_0-2x_0y_0+\frac{1}{u}y_0^3\right).$$
   Derivation in $S$, gives us that 
   $$g_u'(S)=3(u-\sigma)(y_0+uS)^2\geq 0, $$
 showing that, for all $S>0$,
 $$F(\phi(S, P_0, u))\geq F(P_0),$$
 with equality iff $y_0=0$ and $u=0$. By concatenation, we conclude that 
 $$F(\phi(S, P_0, {\bf u}))> F(P_0), \hspace{.5cm} \mbox{ if }\hspace{.5cm}y_0\neq 0\hspace{.5cm} \mbox{ or }\hspace{.5cm}{\bf u}\not\equiv 0,$$
 implying that the control sets of $\Sigma_{\R^3}$ are 
the singletons $\{(x_0, 0, z_0)\}$.
\end{example}

\begin{example} {\it Controllable non-regular LCS on $\HB$}

Let us consider the linear control system 
\begin{flalign*}
&&\left \{\begin{array}
[c]{l}%
\dot{x}(S)=-y(S)\\
\dot{y}(S)=x(S)+{\bf u}(S)\\
\dot{z}(S)=\frac{1}{2}x(S){\bf u}(S)
\end{array}\right., \hspace{.5cm}\mbox{ where }\Omega:=[\rho, \nu], \hspace{.5cm}\rho<0<\nu, &&\hspace{-1cm}\left(\Sigma_{\HB}\right)
\end{flalign*}
is a LCS on $\HB$ whose associated derivation and right-invariant vector field are given, respectively, as 
$$A=\left(\begin{array}{ccc}
    0 & -1 & 0 \\
    1 & 0 & 0 \\
    0 & 0 & 0
\end{array}\right)\hspace{.5cm}\mbox{ and }\hspace{.5cm}Y(x, y, z)=\left(0, 1, \frac{1}{2}x\right).$$
In particular, for $Y=Y(0, 0, 0)$ it holds that
$$AY=(-1, 0, 0)\hspace{.5cm}\mbox{ and }\hspace{.5cm} [AY, Y]=\left(-1, 1, -\frac{1}{2}\right)\hspace{.5cm}\implies\hspace{.5cm}\fh=\mathrm{span}\{Y, AY, [AY, Y]\},$$
showing that $\Sigma_{\HB}$ satisfies the LARC. In particular, $\det A=0$ and the system is non-regular. 

The solutions of $\Sigma_{\HB}$  for a constant control function ${\bf u}\equiv u$ is 
$$\phi(S, P_0, {\bf u})=\left((x_0+u)\cos S-y_0\sin S-u, (x_0+u)\sin S+y_0\cos S, z_0-\frac{u}{2}\left((x_0+u)\sin S+y_0(\cos S-1)\right)+\frac{u^2}{2}S\right).$$

Let us note that the Since the induced system on $\R^2$
\begin{flalign*}
&&\left \{\begin{array}
[c]{l}%
\dot{x}(S)=-y(S)\\
\dot{y}(S)=x(S)+{\bf u}(S)
\end{array}\right., &&\hspace{-1cm}\left(\Sigma_{\R^2}\right)
\end{flalign*}
is a LCS whose associated derivation has a pair of pure imaginary eigenvalues. Since $\Sigma_{\R^2}$ also satisfies the LARC, it is controllable (see for instance \cite[Chapter 3]{FCWK}). Therefore, $\Sigma_{\HB}$ is controllable, as soon as $\{0\}\times\R$ is controllable. In order to show the previous, we construct explicitly a trajectory connecting two arbitrary points $(0, z_1), (0, z_2)\in\{0\}\times\R$ with $z_1<z_2$ as follows:

\bigskip

$\bullet $ {\bf From $(0, z_1)$ to $(0, z_2)$:}

 Let $\delta>0$ such that $(-\delta, \delta)\subset(\rho, \nu)$. Take $\epsilon>0$ satisfying $3\epsilon=z_2-z_1$ and consider $S_0>0$ such that 
    $$\overline{\OC_{S_0}^+(0, z_1)}\subset B_{\epsilon}(0, z_1)\hspace{.5cm}\mbox{ and }\hspace{.5cm}\overline{\OC_{S_0}^-(0, z_2)}\subset B_{\epsilon}(0, z_2),$$
    which exists by the continuity of the solutions and the compactness of $[\rho, \nu]$. Moreover, the LARC implies that $\pi(\OC_{S_0}^+(0, z_1))$ and $\pi(\OC_{S_0}^-(0, z_2))$ are open neighborhoods of the origin in $\R^2$, where $\pi: (v, z)\in\HB\rightarrow v\in \R^2$ is the projection onto the two first components. Since $\Sigma_{\R^2}$ is controllable,  there exists $u\in(-\delta, 0)$ and ${\bf u}_1, {\bf u}_2\in\UC$ such that 
    $$\phi(S_0, (0, z_1), {\bf u}_1)=(v(u), \bar{z}_1) \hspace{.5cm}\mbox{ and }\hspace{.5cm} \phi(S_0, (v(u), \bar{z}_2), {\bf u}_1)= (0, z_2),$$
and by our choices $\bar{z}_1<\bar{z}_2$, where $v(u):=(-u, 0)$ is an equilibrium point of $\Sigma_{\R^2}$. On the other hand, 
$$S_1=\frac{\bar{z}_2-\bar{z}_1}{p(u)}>0\hspace{.5cm}\implies\hspace{.5cm}\phi(S_1, (v(u), \bar{z}_1), u)=(v(u), \bar{z}_1+S_1p(u))=(v(u), \bar{z}_2),$$
   and so, by concatenation, we get 
   $$\phi(S_0, \phi(S_1, \phi(S_0, (0, z_1), {\bf u}_1), u), {\bf u}_2)=(0, z_2).$$

\bigskip

$\bullet $ {\bf From $(0, z_2)$ to $(0, z_1)$:}

\bigskip
    Let us fix $\alpha\in(0, \nu)$ and define $v^*:=-\alpha\left(1, \pi\right)$. By the controllability of the induced system $\Sigma_{\R^2}$, there exist ${\bf u_1}, {\bf u_2}, {\bf u}^*\in\UC$, $S_1, S_2, S^*>0$ and $\bar{z}_1, \bar{z}_2, z^*\in\R$ satisfying
    $$\phi(S_2, (0, z_2), {\bf u}_2)=(v^*, \bar{z}_2), \hspace{.5cm}\phi(S_1, (v(\alpha), \bar{z}_1), {\bf u}_1)=(0, z_1)\hspace{.5cm} \mbox{ and }\hspace{.5cm}\phi(S^*, (v^*, 0), {\bf u}^*)=(v(\alpha), z^*),$$
    where as previously $v(\alpha)=(-\alpha, 0)$. A trajectory connecting $(0, z_2)$ to $(0, z_1)$ is construct by concatenation as follows: 

    \begin{itemize}
        \item[(i)] With control ${\bf u}_2$ and time $S_2$ connect $(0, z_2)$ with $(v^*, \bar{z}_2)$; 
    
        \item[(ii)] Using the constant control ${\bf u}\equiv\alpha$ and $\tau_0=\pi$ we have that 
        $$\phi\left(\pi, (v^*, \bar{z}_2), \alpha\right)=\left(-\alpha\left(1, -\pi\right), \bar{z}_2-\alpha^2\frac{\pi}{2}\right);$$

        \item[(iii)] Since  $v^*$ and $-\alpha(1, -\pi)$ have the same norm,  there exists $\tau_1>0$ such that $R_{\tau_1}\left(-\alpha(1, -\pi)\right)=v^*$, where $R_{\tau_1}$ is the counter-clockwise rotation of $\tau_1$-degrees. Hence, 
        $$\phi\left(\tau_1, \phi\left(\tau_0, (v^*, \bar{z}_2), \alpha\right), 0\right)=\phi\left(\tau_1, \left(-\alpha(1, -\pi), \bar{z}_2-\alpha^2\frac{\pi}{2}\right), 0\right)$$
        $$=\left(R_{\tau_1}\left(-\alpha(1, -\pi)\right), \bar{z}_2-\alpha^2\frac{\pi}{2}\right)=\left(v^*, \bar{z}_2-\alpha^2\frac{\pi}{2}\right),$$
        which gives us a periodic trajectory for the induced system on the plane (see Figure \ref{planar})

        \item[(iv)] Repeat items (ii) and (iii) $n_0$-times to obtain a trajectory connecting $(v^*, \bar{z}_2)$ to the point $(v^*, \hat{z}_2)$, where 
        
        $$\hat{z}_2:= \bar{z}_2 -n_0\alpha^2\frac{\pi}{2}\hspace{.5cm}\mbox{ satisfies }\hspace{.5cm}\hat{z}_2\leq \bar{z}_1-z^*;$$

        \item[(v)] Now, with control ${\bf u}^*$ and time $S^*$ we have that 
        $$\phi(S^*, (v^*, \hat{z}_2), {\bf u}^*)=\phi(S^*, (v^*,0), {\bf u}^*)+(0, \hat{z}_2)=(v(\alpha), z^*)+(0, \hat{z}_2)=(v(\alpha), z^*+\hat{z}_2);$$

        \item [(vi)] With time  $S_0:=\frac{2}{\alpha^2}(\bar{z}_1-(z^*+\hat{z}_2))\geq 0$ and control $u\equiv \rho$ we get that
        $$\phi(S_0, (v(\alpha), z^*+\hat{z}_2), \rho)=\left(v(\alpha), S_0\frac{\alpha^2}{2}+(z^*+\hat{z}_2)\right)=(v(\alpha), \bar{z}_1);$$

 \item[(vii)] Now, with control ${\bf u}_1$ and time $S_1$ we get that $\phi(S_1, (v(\alpha), \bar{z}_1), {\bf u}_1)=(0, z_1)$, which gives us the desired trajectory (see Figure \ref{Heisenberg}).
       
\end{itemize}

\end{example}

\pagebreak 

\begin{figure}[h]
	\centering
	\begin{subfigure}{.35\textwidth}
		\centering
		\includegraphics[width=1\linewidth]{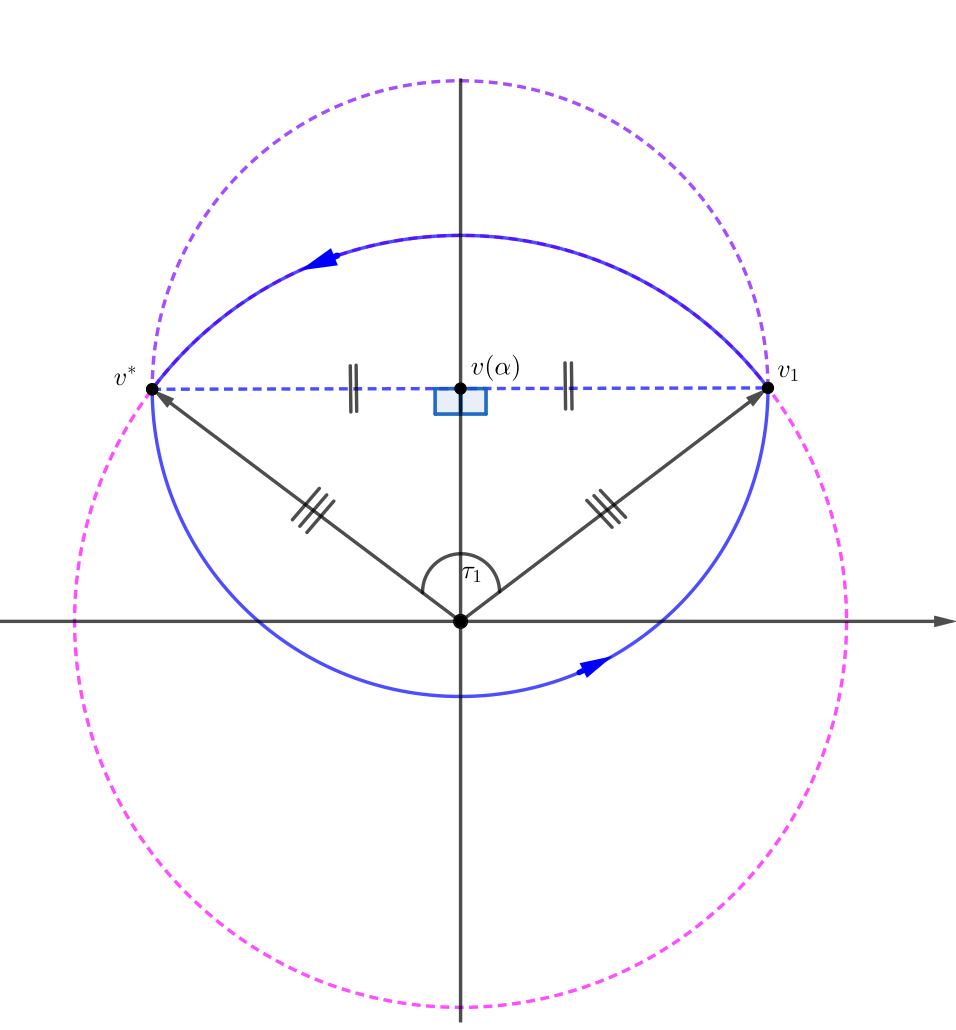}
		\caption{Periodic orbit on the plane.}
		\label{planar}
	\end{subfigure}%
 \hspace{2cm}
	\begin{subfigure}{.35\textwidth}
		\centering
		\includegraphics[width=1\linewidth]{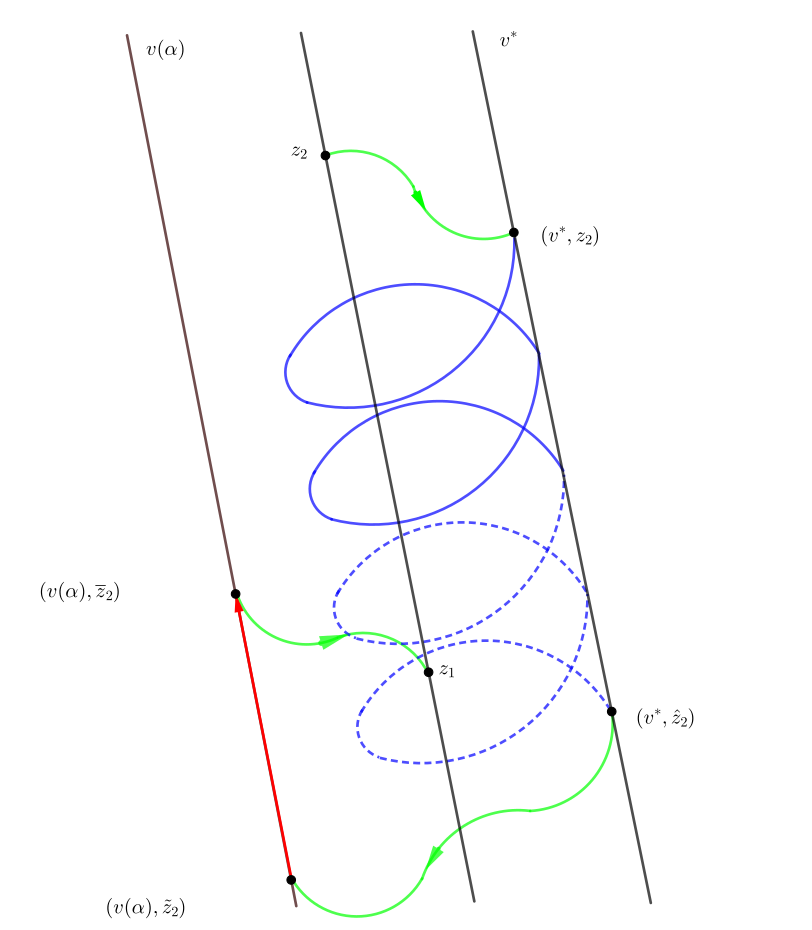}
 		\caption{Trajectory connecting $(0, z_2)$ to $(0, z_1)$.}
		\label{Heisenberg}
	\end{subfigure}
\end{figure}

\bigskip

\begin{remark}
It is important to note that in both examples, the LCS induced on $\R^2$ given by the first two components is controllable. However, the differences in the dynamics of the systems on $\HB$ are quite remarkable, showing that one cannot expect to find a condition assuring the existence of control sets with nonempty interiors for non-regular systems.
\end{remark}



\subsubsection*{CRediT authorship contribution statement}

Both authors participated in the conceptualization, research, methodology and revision of the article. Adriano Da Silva involved in writing original draft preparation and editing.

\subsubsection*{Declaration of competing interest}

The authors declare that they have no known competing financial interests or personal relationships that could have appeared to influence the work reported in this paper. 

\subsubsection*{Data availability}

 No data was used for the research described in the article. 
 



\begin{thebibliography}{99}
	\bibitem {DSAy}
	\newblock V. Ayala and A. Da Silva, 
	\newblock \emph{On the structural properties of the bounded control set of a linear control system}, 
	\newblock Nonlinear Differ. Equ. Appl. 29, 53 (2022). https://doi.org/10.1007/s00030-022-00784-1

 		\bibitem {DS1}
	\newblock V. Ayala, A. Da Silva and A. O. Robles,
		\newblock \emph{Dynamics of Linear Control Systems on the Group of Proper
Motions}, 
		\newblock J Dyn Control Syst (2023). https://doi.org/10.1007/s10883-023-09667-9

	\bibitem {DSAyGZ}
	\newblock V. Ayala, A. Da Silva and G. Zsigmond, 
	\newblock \emph{Control sets of linear systems on Lie groups.} 
	\newblock Nonlinear Differ. Equ. Appl. 24, 8 (2017). https://doi.org/10.1007/s00030-017-0430-5

\bibitem  {VATi}
\newblock V. Ayala and J. Tirao, 
\newblock \emph{Linear control systems on lie groups and controllability}, 
\newblock American Mathematical Society, Series: Symposia in Pure Mathematics, 64 (1999), 47-64.

 

	

 



\bibitem{FCWK} 
\newblock F. Colonius and W. Kliemann,
\newblock \emph{The Dynamics of Control.}
\newblock Birkh\"{a}user, 2000.

	\bibitem{DS}
	\newblock A. Da Silva, 
	\newblock \emph{Controllability of linear systems on solvable Lie groups},
	\newblock SIAM Journal on Control and Optimization 54 No 1 (2016), 372-390.
	
	
	

\bibitem{JPh1} 
\newblock P. Jouan, 
\newblock\emph{Equivalence of Control Systems with Linear Systems on Lie Groups and Homogeneous Spaces.} 
\newblock ESAIM: Control Optimization and Calculus of Variations, 16 (2010) 956-973.

\bibitem{Jouan11} \textsc{Ph. Jouan.} \textit{Controllability of linear systems on Lie groups},
Journal of Dynamical and control systems, Vol. 17, No 4 (2011) 591-616.



\bibitem{Jurd} 
\newblock V. Jurdjevic,
\newblock \emph{Geometric control theory.}
\newblock Cambridge University Press, (1997).


\bibitem{ALEB} 
\newblock A. L. Onishchik and E. B. Vinberg, 
\newblock \emph{Lie Groups and Lie Algebras III - Structure of Lie Groups and Lie Algebras.} 
\newblock Berlin: Springer (1990).


\end{thebibliography}
\end{document}